\documentclass[conference,onecolumn]{IEEEtran}

\usepackage[cp1250]{inputenc}	
\usepackage{amsmath}			
\usepackage{amsfonts}
\usepackage{amssymb}
\usepackage{amsthm}
\usepackage[T1]{fontenc}
\usepackage[stable]{footmisc}
\usepackage{animate}
\usepackage{psfrag}
\usepackage{fancybox}
\usepackage{bm} 
\usepackage{subfigure}
\usepackage{listings}
\usepackage[ruled,vlined,algosection]{algorithm2e}
\usepackage{float}

\hyphenation{}
\usepackage{multirow}

\theoremstyle{plain}
\newtheorem{theorem}{Theorem}
\newtheorem{lemma}[theorem]{Lemma}
\newtheorem{corollary}[theorem]{Corollary}

\theoremstyle{definition}
\newtheorem{definition}{Definition}
\newtheorem{assumption}{Assumption}

\theoremstyle{remark}

\renewcommand{\vec}[1]{\ensuremath{\mathbf{#1}}}

\begin{document}

\title{Gradient of the Value Function in\\ Parametric Convex Optimization Problems}

\author{\IEEEauthorblockN{
Mato Baoti\'{c}}

\IEEEauthorblockA{Faculty of Electrical Engineering and Computing\\
University of Zagreb, Croatia \\
Email: mato.baotic@fer.hr}}

\maketitle

\begin{abstract}
We investigate the computation of the gradient of the value function in parametric convex optimization problems. We derive general expression for the gradient of the value function in terms of the cost function, constraints and Lagrange multipliers. In particular, we show that for the strictly convex parametric quadratic program the value function is continuously differentiable (denoted C$^{1}$) at every point in the interior of feasible space for which the Linear Independent Constraint Qualification holds.
\end{abstract}

\IEEEpeerreviewmaketitle

\section{Notation}
In general, we use bold letters (e.g., $\vec{a}$, $\vec{A}$) to denote vectors and matrices, and calligraphic letters (e.g., $\mathcal{A}$) to denote sets. With $\mathbb{N}$, $\mathbb{R}$, $\mathbb{R}^{n}$ and $\mathbb{R}^{m\times n}$ we denote the set of integers, real numbers, $n$-dimensional real (column) vectors, and $m\times n$ real matrices, respectively. Furthermore, $\vec{I}_{n}$ denotes the identity matrix in $\mathbb{R}^{n\times n}$, while $\vec{0}_{m}$ denotes the vector in $\mathbb{R}^{m}$ with all elements equal $0$, and $\vec{1}_{m}$ denotes the vector in $\mathbb{R}^{m}$ with all elements equal $1$. For a set $\mathcal{A}$ the set of all subsets of $\mathcal{A}$ is denoted with $2^{\mathcal{A}}$. For a set $\mathcal{A}\subset\mathbb{R}^{n}$ we denote with $\mathrm{cl}(\mathcal{A})$ the closure of $\mathcal{A}$ and with $\mathrm{int}(\mathcal{A})$ the interior of $\mathcal{A}$.
For matrix $\vec{A}\in\mathbb{R}^{m\times n}$, with $\vec{A}^{\top}$ we denote its transpose. For symmetric matrix $\vec{A}=\vec{A}^{\top} \in\mathbb{R}^{n\times n}$, with $\vec{A}\succ 0$ we state that it is positive definite, and with $\vec{A}\succeq 0$ that is is positive semidefinite. For a vector $\vec{x}\in\mathbb{R}^{n}$ with $x_i$ we denote $i$-th element of $\vec{x}$, $i\in\{1,\ldots, n\}$. For a matrix $\vec{A}\in\mathbb{R}^{m\times n}$
with $A_{i,j}$ we denote the element of $\vec{A}$ in $i$-th row and $j$-th column, with $\vec{A}_{i}$ we denote the $i$-th row, with $\vec{A}_{\bullet,j}$ we denote the $j$-th column, $i\in\{1,\ldots, m\}$, $j\in\{1,\ldots, n\}$, with $\vec{A}_{\mathcal{E}}$ we denote the matrix formed by rows of $\vec{A}$ indexed by $\mathcal{E}\subseteq\{1,\ldots,m\}$.

Let $\vec{h}: \mathbb{R}^{n} \rightarrow \mathbb{R}^{m}$ be a (vector valued) function,
\begin{equation*}
\vec{h}(\vec{x}) =
\left[\begin{array}{c}
{h}_{1}(\vec{x})\\
\vdots\\
{h}_{m}(\vec{x})
\end{array} \right]
=
\left[\begin{array}{c}
{h}_{1}(x_{1}, \ldots, x_{n})\\
\vdots\\
{h}_{m}(x_{1}, \ldots, x_{n})
\end{array} \right].
\end{equation*}
If $\vec{h}$ is differentiable at $\vec{x}_0\in\mathbb{R}^{n}$ then we can define $\dfrac{\partial \vec{h}}{\partial \vec{x}}(\vec{x}_0)$ as a \emph{matrix} in $\mathbb{R}^{m\times n}$
\begin{equation}\label{eq:Jacobian}
\dfrac{\partial \vec{h}}{\partial \vec{x}}(\vec{x}_0) :=
\left[\begin{array}{ccc}
\dfrac{\partial {h}_1}{\partial x_1}(\vec{x}_0) & \cdots &
\dfrac{\partial {h}_1}{\partial x_n}(\vec{x}_0)\\
\vdots & \ddots & \vdots \\
\dfrac{\partial {h}_m}{\partial x_1}(\vec{x}_0) & \cdots &
\dfrac{\partial {h}_m}{\partial x_n}(\vec{x}_0)
\end{array} \right],
\end{equation}
where
\begin{equation*}
\dfrac{\partial {h}_i}{\partial x_j}(\vec{x}_0):=
\left.\dfrac{\partial {h}_i(\vec{x})}{\partial x_j}\right|_{\vec{x}=\vec{x}_0}, \quad \forall i\in\{1,\ldots, m\}, \forall j\in\{1,\ldots, n\}.
\end{equation*}
The expression \eqref{eq:Jacobian} is also called \emph{Jacobian matrix} of $\vec{h}$ at $\vec{x}_0$.
Note that for a scalar function $V: \mathbb{R}^{n} \rightarrow \mathbb{R}$ that is differentiable at $\vec{x}_0$, the Jacobian matrix becomes a \emph{row vector}
\begin{equation*}
\dfrac{\partial {V}}{\partial \vec{x}}(\vec{x}_0) =
\left[\begin{array}{ccc}
\dfrac{\partial {V}}{\partial x_1}(\vec{x}_0) &
\cdots &
\dfrac{\partial {V}}{\partial x_n}(\vec{x}_0)
\end{array}\right].
\end{equation*}
The gradient of $V$ computed at $\vec{x}_0$, denoted as $\nabla_{\vec{x}}V(\vec{x}_0)$, is a \emph{column vector} in $\mathbb{R}^{n}$ defined as
\begin{equation*}
\nabla_{\vec{x}}V(\vec{x}_0)
:= \left[\dfrac{\partial {V}}{\partial \vec{x}}(\vec{x}_0)\right]^{\top}.
\end{equation*}
Note that the gradient $\nabla_{\vec{x}}V(\vec{x}_0)$ is well defined when $V$ is differentiable at $\vec{x}_0$ (i.e., $V$ does not have to be differentiable everywhere, it is sufficient for it to be differentiable locally).

\section{Parametric convex optimization problem}
%
Consider the following parametric convex optimization problem:
\begin{equation}\label{eq:mpProblem}
\begin{array}{rcl}
V(\vec{x}):=&\min\limits_{\vec{z}} & f(\vec{z},\vec{x})\\[1ex]
&\mathrm{s.t.}& \vec{g}(\vec{z},\vec{x})\leq \vec{0}_{m},
\end{array}
\end{equation}
where $f: \mathbb{R}^{s} \times \mathbb{R}^{n} \rightarrow \mathbb{R}$ and $\vec{g}: \mathbb{R}^{s} \times \mathbb{R}^{n} \rightarrow \mathbb{R}^{m}$ are \emph{convex} and \emph{continuously differentiable} functions on their domains, $\vec{x}\in\mathbb{R}^{n}$ is the parameter, $\vec{z}\in\mathbb{R}^{s}$ is the optimization vector, while $n\in\mathbb{N}$, $s\in \mathbb{N}$, and $m\in\mathbb{N}$, denote the number of parameters, optimization variables, and constraints, respectively.

The function $f(\vec{z},\vec{x})$ is called the \emph{cost function}, $\vec{g}(\vec{z}, \vec{x})$ are referred to as \emph{constraints}, and function $V(\vec{x}): \mathbb{R}^{n} \rightarrow \mathbb{R}$ is called the \emph{value function}. With $\mathcal{X}\subseteq \mathbb{R}^{n}$ we denote the \emph{set of feasible parameters},
\begin{equation}\label{eq:Xfeas}
\mathcal{X}:=\{\vec{x}\in \mathbb{R}^{n} \ | \
\exists \vec{z}\in\mathbb{R}^{s} : \vec{g}(\vec{z},\vec{x})\leq \vec{0}_{m} \}.
\end{equation}
Since $\vec{g}(\vec{z},\vec{x})$ is a convex function, it follows that $\mathcal{X}\subseteq\mathbb{R}^{n}$ is a closed convex set.\footnote{The set $\mathcal{X}$ is projection (convexity preserving operation) of an intersection (convexity preserving operation) of a finite number of sublevel sets of convex functions (convexity preserving operation) $g_i(\vec{z},\vec{x})$, $i=1,\ldots,m$.}

In the rest of the paper we use the following standing assumption.
\begin{assumption}\label{ass:WellDefined}
The set of feasible parameters, $\mathcal{X}$, is a full-dimensional set in $\mathbb{R}^{n}$, and optimization problem \eqref{eq:mpProblem} is well defined for all $\vec{x}\in\mathcal{X}$, i.e., $V(\vec{x})$ is finite and minimum is achieved with some $\vec{z}^{*}(\vec{x})$.
\end{assumption}

Now we can define $\mathcal{Z}: \mathcal{X} \rightrightarrows 2^{\mathbb{R}^s}$, a point-to-set mapping that maps each $\vec{x}\in\mathcal{X}$ to a \emph{set of feasible optimization variables}
\begin{equation}\label{eq:Zfeas}
\mathcal{Z}(\vec{x}):=\{\vec{z}\in \mathbb{R}^{s} \ | \
\vec{g}(\vec{z},\vec{x})\leq \vec{0}_{m} \},
\end{equation}
as well as $\mathcal{Z}^{*}: \mathcal{X} \rightrightarrows 2^{\mathbb{R}^s}$, the point-to-set mapping that maps each $\vec{x}\in\mathcal{X}$ to an \emph{optimal set}
\begin{equation}\label{eq:Zopt}
\mathcal{Z}^{*}(\vec{x}):=\{\bar{\vec{z}}\in \mathcal{Z}(\vec{x}) \ | \
f(\bar{\vec{z}},\vec{x}) \leq f(\vec{z},\vec{x})
\ \forall \vec{z} \in \mathcal{Z}(\vec{x}) \}.
\end{equation}
Under Assumption~\ref{ass:WellDefined}, by using convexity of $\vec{g}(\vec{z},\vec{x})$ and $f(\vec{z},\vec{x})$, one can easily show that $\mathcal{Z}(\vec{x})$ and $\mathcal{Z}^{*}(\vec{x})$ are closed convex sets for all $\vec{x}\in\mathcal{X}$.

In the following we use the notion of an \emph{optimizer} (function) for the problem \eqref{eq:mpProblem}, $\vec{z}^{*}: \mathcal{X} \rightarrow \mathbb{R}^{s}$,
\begin{equation}\label{eq:zopt}
\vec{z}^{*}(\vec{x})\in\mathcal{Z}^{*}(\vec{x}).
\end{equation}
In general, $\mathcal{Z}^{*}(\vec{x})$ is not a singleton, and expression \eqref{eq:zopt} states that $\vec{z}^{*}(\vec{x})$ is \emph{choosing} one particular value for each $\vec{x}$. We do not limit ourselves in the way this choice is being made, however, we will later put some conditions on the properties of $\vec{z}^{*}(\cdot)$.

In this paper we are interested in computation of a gradient of the value function, $\nabla_{\vec{x}}V(\vec{x})$, for all $\vec{x}$ in interior of $\mathcal{X}$.

\section{Gradient of the value function}
For problem \eqref{eq:mpProblem} we introduce a dual problem
\begin{equation}\label{eq:dual}
V_{\mathrm{D}}(\vec{x}) := \ \max\limits_{\boldsymbol{\lambda}\geq \vec{0}_{m}} \
\inf\limits_{\vec{z}} \ L(\vec{z},\boldsymbol{\lambda},\vec{x}),
\end{equation}
where $L:\mathbb{R}^{s} \times \mathbb{R}^{m} \times \mathbb{R}^{n} \rightarrow \mathbb{R}$ is the Lagrangian function
\begin{equation}\label{eq:Lagrangian}
L(\vec{z},\boldsymbol{\lambda},\vec{x}):=
f(\vec{z},\vec{x}) + \boldsymbol{\lambda}^{\top}\vec{g}(\vec{z},\vec{x}),
\end{equation}
and $\boldsymbol{\lambda}\in\mathbb{R}^{m}$ is the vector of dual variables, i.e., Lagrange multipliers. Similarly as in the case of \eqref{eq:mpProblem}, we denote with $\boldsymbol{\lambda^{*}}(\vec{x})$ an optimal value of dual variables, which is one choice out of all available optimizers of \eqref{eq:dual} at $\vec{x}\in\mathcal{X}$.

\begin{assumption}\label{ass:StrongDuality}
Strong duality holds for the problem \eqref{eq:mpProblem} and its dual \eqref{eq:dual} at $\bar{\vec{x}}\in\mathcal{X}$, i.e.,
\begin{equation}\label{eq:SD}
V(\bar{\vec{x}}) = V_{\mathrm{D}}(\bar{\vec{x}}).
\end{equation}
\end{assumption}

\noindent Note that there is variety of (sufficient) regularity conditions (e.g., Slater condition, Linear Independence Constraint Qualification) which guarantee that the convex problem \eqref{eq:mpProblem} satisfies Assumption~\ref{ass:StrongDuality}.

Under Assumption~\ref{ass:StrongDuality} we know that Karush-Kuhn-Tucker (KKT) first order necessary conditions must be satisfied for an optimal solution $(\vec{z}^{*},\boldsymbol{\lambda}^{*})$ of primal problem \eqref{eq:mpProblem} and dual problem \eqref{eq:dual} at $\bar{\vec{x}}\in\mathcal{X}$
\begin{equation}\label{eq:KKTOPT}
\nabla_{\vec{z}} f(\vec{z}^{*},\bar{\vec{x}})+\sum\limits_{i=1}^{m}\lambda_{i}^{*}\nabla_{\vec{z}} g_{i}(\vec{z}^{*},\bar{\vec{x}}) = \vec{0}_{s},
\end{equation}
\begin{equation}\label{eq:KKTPF}
\vec{g}(\vec{z}^{*},\bar{\vec{x}}) \leq \vec{0}_{m},
\end{equation}
\begin{equation}\label{eq:KKTDF}
\boldsymbol{\lambda}^{*} \geq \vec{0}_{m},
\end{equation}
\begin{equation}\label{eq:KKTCS}
\lambda_{i}^{*} g_{i}(\vec{z}^{*},\bar{\vec{x}}) = 0, \ i=1,\ldots,m.
\end{equation}

To emphasize that $\vec{z}^{*}$ and $\boldsymbol{\lambda}^{*}$ depend on parameter $\bar{\vec{x}}\in\mathcal{X}$, we write the stationarity condition \eqref{eq:KKTOPT} in the following form
\begin{equation}\label{eq:KKT1}
\dfrac{\partial f}{\partial \vec{z}^{*}} (\vec{z}^{*}(\bar{\vec{x}}),\bar{\vec{x}}) +
[\boldsymbol{\lambda}^{*}(\bar{\vec{x}})]^{\top}
\dfrac{\partial \vec{g}}{\partial \vec{z}^{*}}(\vec{z}^{*}(\bar{\vec{x}}),\bar{\vec{x}}) = \vec{0}_{s}^{\top}.
\end{equation}
Under Assumption~\ref{ass:StrongDuality}, at $(\vec{z}^{*}(\bar{\vec{x}}),\boldsymbol{\lambda}^{*}(\bar{\vec{x}}))$ the value function $V$ has the same value as the Lagrangian function $L$, i.e.,
\begin{equation}\label{eq:VL}
V(\bar{\vec{x}})=L(\vec{z}^{*}(\bar{\vec{x}}),\boldsymbol{\lambda}^{*}(\bar{\vec{x}}),\bar{\vec{x}})=
f(\vec{z}^{*}(\bar{\vec{x}}),\bar{\vec{x}}) + [\boldsymbol{\lambda}^{*}(\bar{\vec{x}})]^{\top}\vec{g}(\vec{z}^{*}(\bar{\vec{x}}),\bar{\vec{x}}).
\end{equation}

%
Let $\vec{z}^{*}(\cdot)$ and $\boldsymbol{\lambda}^{*}(\cdot)$ be continuously differentiable functions at $\bar{\vec{x}}\in\mathcal{X}$. By differentiating \eqref{eq:VL} we get
\begin{equation*}
\begin{array}{rcl}
\dfrac{\partial V}{\partial \vec{x}}(\bar{\vec{x}}) &=&
\dfrac{\partial f}{\partial \vec{z}^{*}}(\vec{z}^{*}(\bar{\vec{x}}),\bar{\vec{x}})
\cdot \dfrac{\partial \vec{z}^{*}}{\partial \vec{x}}(\bar{\vec{x}})
+\dfrac{\partial f}{\partial \vec{x}}(\vec{z}^{*}(\bar{\vec{x}}),\bar{\vec{x}})
+[\vec{g}(\vec{z}^{*}(\bar{\vec{x}}),\bar{\vec{x}})]^{\top}
\dfrac{\partial \boldsymbol{\lambda}^{*}}
{\partial \vec{x}}(\bar{\vec{x}}) + \\[2ex]
& & + [\boldsymbol{\lambda}^{*}(\bar{\vec{x}})]^{\top}
\left(\dfrac{\partial \vec{g}}{\partial \vec{z}^{*}}(\vec{z}^{*}(\bar{\vec{x}}),\bar{\vec{x}})
\cdot \dfrac{\partial \vec{z}^{*}}{\partial \vec{x}}(\bar{\vec{x}})
+\dfrac{\partial \vec{g}}{\partial \vec{x}}(\vec{z}^{*}(\bar{\vec{x}}), \bar{\vec{x}})\right).
\end{array}
\end{equation*}
By rearranging the terms we have
\begin{equation*}
\begin{array}{rcl}
\dfrac{\partial V}{\partial \vec{x}}(\bar{\vec{x}}) &=&
\dfrac{\partial f}{\partial \vec{x}}(\vec{z}^{*}(\bar{\vec{x}}),\bar{\vec{x}})
+[\vec{g}(\vec{z}^{*}(\bar{\vec{x}}),\bar{\vec{x}})]^{\top}
\dfrac{\partial \boldsymbol{\lambda}^{*}}
{\partial \vec{x}}(\bar{\vec{x}})
+[\boldsymbol{\lambda}^{*}(\bar{\vec{x}})]^{\top}
\dfrac{\partial \vec{g}}{\partial \vec{x}}(\vec{z}^{*}(\bar{\vec{x}}), \bar{\vec{x}}) + \\[2ex]
& & +
\left(
\dfrac{\partial f}{\partial \vec{z}^{*}}(\vec{z}^{*}(\bar{\vec{x}}),\bar{\vec{x}})
+[\boldsymbol{\lambda}^{*}(\bar{\vec{x}})]^{\top}
\dfrac{\partial \vec{g}}{\partial \vec{z}^{*}}(\vec{z}^{*}(\bar{\vec{x}}),\bar{\vec{x}})
\right)\dfrac{\partial \vec{z}^{*}}{\partial \vec{x}}(\bar{\vec{x}}),
\end{array}
\end{equation*}
which combined with the KKT condition \eqref{eq:KKT1} finally gives
\begin{equation}\label{eq:partV}
\dfrac{\partial V}{\partial \vec{x}}(\bar{\vec{x}}) =
\dfrac{\partial f}{\partial \vec{x}}(\vec{z}^{*}(\bar{\vec{x}}),\bar{\vec{x}})
+[\vec{g}(\vec{z}^{*}(\bar{\vec{x}}),\bar{\vec{x}})]^{\top}
\dfrac{\partial \boldsymbol{\lambda}^{*}}
{\partial \vec{x}}(\bar{\vec{x}})
+[\boldsymbol{\lambda}^{*}(\bar{\vec{x}})]^{\top}
\dfrac{\partial \vec{g}}{\partial \vec{x}}(\vec{z}^{*}(\bar{\vec{x}}), \bar{\vec{x}}).
\end{equation}
Clearly, the expression \eqref{eq:partV} is the transpose of the gradient of the value function. We repeat again that this result for $\nabla_{\vec{x}}V(\bar{\vec{x}})$ was derived under assumption of continuous differentiability of $\vec{z}^{*}(\cdot)$ and $\boldsymbol{\lambda}^{*}(\cdot)$ at $\bar{\vec{x}}\in\mathcal{X}$.

\begin{definition}[Active set]
Consider a feasible point $(\vec{z},\vec{x})$ of problem \eqref{eq:mpProblem}. We say that the $i$-th constraint $g_i(\vec{z},\vec{x})\leq 0$, $i\in\{1,\ldots,m\}$, is
\emph{active} at $(\vec{z},\vec{x})$ if $g_i(\vec{z},\vec{x})= 0$. If $g_i(\vec{z},\vec{x})< 0$ we say that the $i$-th constraint is \emph{inactive} at $(\vec{z},\vec{x})$. The active set for $\bar{\vec{x}}\in\mathcal{X}$, denoted $\mathcal{A}(\bar{\vec{x}})$, is the set of all constraints that are active at $(\vec{z}^{*}(\bar{\vec{x}}),\bar{\vec{x}})$
\begin{equation}\label{eq:AS}
\mathcal{A}(\bar{\vec{x}}):=\{i\in\{1,\ldots,m\} \ | \
g_i(\vec{z}^{*}(\bar{\vec{x}}),\bar{\vec{x}})=0 \}.
\end{equation}
\end{definition}
\noindent Note that: i) $\emptyset$ may be an active set for some $\bar{\vec{x}}\in\mathcal{X}$, ii) there may be some subsets of $\{1,\ldots,m\}$ which are not active sets for any $\bar{\vec{x}}\in\mathcal{X}$, iii) the number of (different) active sets of problem \eqref{eq:mpProblem} is at most $2^{m}$.

\begin{definition}[Critical region]
Let $\mathcal{E}$ be an active set of problem \eqref{eq:mpProblem} (i.e., there exists $\bar{\vec{x}}\in\mathcal{X}$ such that $\mathcal{A}(\bar{\vec{x}}) = \mathcal{E}$). Then the following set
\begin{equation}\label{eq:CR}
\mathcal{R}(\mathcal{E}):=\{\vec{x}\in\mathcal{X} \ | \ \mathcal{A}(\vec{x})= {\mathcal{E}} \}
\end{equation}
is called the \emph{critical region} of problem \eqref{eq:mpProblem} (associated with an active set $\mathcal{E}$).
\end{definition}

\begin{definition}[Neighboring regions]
Two critical regions, $\mathcal{R}_1$ and $\mathcal{R}_2$, are called \emph{neighboring} if intersection of their closures is a non-empty set
\begin{equation*}
\mathrm{cl}(\mathcal{R}_1) \cap \mathrm{cl}(\mathcal{R}_1)\neq \emptyset.
\end{equation*}
\end{definition}

In the following we will use $\mathbb{A}$ to denote the set of all active sets of problem \eqref{eq:mpProblem} that generate full-dimensional critical regions
\begin{equation}\label{eq:ndimCR}
\mathbb{A}:=\{\mathcal{E} \subseteq \{1,\ldots,m\} \ | \
\exists \bar{\vec{x}}\in\mathcal{X}: \mathcal{A}(\bar{\vec{x}})=\mathcal{E}, \
\mathcal{R}(\mathcal{E}) \ \textrm{is full-dimensional set} \},
\end{equation}
and note the following statement is true
\begin{equation}
\mathcal{X} = \bigcup_{\mathcal{E} \in\mathbb{A}} \mathrm{cl}(\mathcal{R}(\mathcal{E})),
\end{equation}
because $\mathcal{X}$ is a closed full-dimensional set.

\begin{lemma}\label{lem:gradV}
Let Assumption~\ref{ass:WellDefined} and Assumption~\ref{ass:StrongDuality} hold. Let $\mathcal{E}\in\mathbb{A}$
and let the solution to the problem \eqref{eq:mpProblem} and its dual \eqref{eq:dual}, $\vec{z}^{*}(\vec{x})$ and $\boldsymbol{\lambda}^{*}(\vec{x})$, be continuously differentiable functions on $\mathrm{int}(\mathcal{R}(\mathcal{E}))$. Then for all $\vec{x} \in \mathrm{int}(\mathcal{R}(\mathcal{E}))$ the gradient of the value function can be computed as follows
\begin{equation}\label{eq:gradV}
\nabla_{\vec{x}}V(\vec{x}) =
\left[\dfrac{\partial f}{\partial \vec{x}}(\vec{z}^{*}(\vec{x}),\vec{x})\right]^{\top}
+\left[\dfrac{\partial \vec{g}}{\partial \vec{x}}(\vec{z}^{*}(\vec{x}), \vec{x})\right]^{\top}
\boldsymbol{\lambda}^{*}(\vec{x}),
\end{equation}
i.e.,
\begin{equation}\label{eq:gradVAS}
\nabla_{\vec{x}}V(\vec{x}) =
\left[\dfrac{\partial f}{\partial \vec{x}}(\vec{z}^{*}(\vec{x}),\vec{x})\right]^{\top}
+\sum\limits_{i\in\mathcal{E}}
\left[ \dfrac{\partial g_{i}}{\partial \vec{x}}(\vec{z}^{*}(\vec{x}), \vec{x})\right]^{\top}
\lambda_{i}^{*}(\vec{x}).
\end{equation}
\end{lemma}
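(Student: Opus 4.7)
The plan is to leverage the identity \eqref{eq:partV} that was already derived earlier in the section under the hypothesis of continuous differentiability of $\vec{z}^{*}(\cdot)$ and $\boldsymbol{\lambda}^{*}(\cdot)$. Since this hypothesis is precisely what the lemma assumes on $\mathrm{int}(\mathcal{R}(\mathcal{E}))$, for any $\vec{x}\in\mathrm{int}(\mathcal{R}(\mathcal{E}))$ we already have
\begin{equation*}
\dfrac{\partial V}{\partial \vec{x}}(\vec{x}) =
\dfrac{\partial f}{\partial \vec{x}}(\vec{z}^{*}(\vec{x}),\vec{x})
+[\vec{g}(\vec{z}^{*}(\vec{x}),\vec{x})]^{\top}
\dfrac{\partial \boldsymbol{\lambda}^{*}}{\partial \vec{x}}(\vec{x})
+[\boldsymbol{\lambda}^{*}(\vec{x})]^{\top}
\dfrac{\partial \vec{g}}{\partial \vec{x}}(\vec{z}^{*}(\vec{x}), \vec{x}).
\end{equation*}
The remaining job is therefore purely to show that the middle (dual-derivative) term vanishes and to rewrite the last term as a sum over $\mathcal{E}$.

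The crucial observation is that on $\mathrm{int}(\mathcal{R}(\mathcal{E}))$ the active set is, by definition of the critical region, constant and equal to $\mathcal{E}$. I would split the expression $[\vec{g}(\vec{z}^{*}(\vec{x}),\vec{x})]^{\top}\frac{\partial \boldsymbol{\lambda}^{*}}{\partial \vec{x}}(\vec{x})$ componentwise into contributions from $i\in\mathcal{E}$ and $i\notin\mathcal{E}$. For $i\in\mathcal{E}$, the factor $g_{i}(\vec{z}^{*}(\vec{x}),\vec{x})$ is identically zero on the critical region, so the corresponding row of $\partial\boldsymbol{\lambda}^{*}/\partial\vec{x}$ is multiplied by zero. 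For $i\notin\mathcal{E}$, complementary slackness \eqref{eq:KKTCS} together with $g_{i}(\vec{z}^{*}(\vec{x}),\vec{x})<0$ throughout $\mathrm{int}(\mathcal{R}(\mathcal{E}))$ forces $\lambda_{i}^{*}(\vec{x})\equiv 0$ locally; by the assumed $C^{1}$-regularity this implies $\partial\lambda_{i}^{*}/\partial\vec{x}(\vec{x})=\vec{0}^{\top}$, and again the contribution vanishes. Hence the middle term is zero.

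Transposing the surviving two terms then yields \eqref{eq:gradV}. To obtain the sum form \eqref{eq:gradVAS} I would again split the remaining sum $\sum_{i=1}^{m}\lambda_{i}^{*}(\vec{x})[\partial g_{i}/\partial \vec{x}]^{\top}$ over $i\in\mathcal{E}$ and $i\notin\mathcal{E}$; by the same local-vanishing argument above, $\lambda_{i}^{*}(\vec{x})=0$ for $i\notin\mathcal{E}$, leaving only the indices in $\mathcal{E}$.

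The only step that requires any care is the justification that $\lambda_{i}^{*}\equiv 0$ \emph{in a neighborhood} for inactive indices (not just pointwise), which is what lets me conclude the derivative vanishes. This is immediate once one uses that $\vec{x}$ lies in the \emph{interior} of $\mathcal{R}(\mathcal{E})$, so a whole neighborhood of $\vec{x}$ still belongs to the same critical region and thus still has $i$ inactive; everything else is a direct consequence of \eqref{eq:partV} and bookkeeping.
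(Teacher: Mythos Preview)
Your proposal is correct and follows essentially the same route as the paper: start from \eqref{eq:partV}, split the middle term $[\vec{g}]^{\top}\frac{\partial\boldsymbol{\lambda}^{*}}{\partial\vec{x}}$ into active indices (where $g_i=0$) and inactive indices (where $\lambda_i^{*}\equiv 0$ on $\mathrm{int}(\mathcal{R}(\mathcal{E}))$, hence $\partial\lambda_i^{*}/\partial\vec{x}=\vec{0}^{\top}$), conclude it vanishes, and then drop the inactive indices from the last term for the same reason. Your extra remark about needing $\lambda_i^{*}\equiv 0$ in a full neighborhood (guaranteed by being in the interior of the critical region) to infer the vanishing derivative is exactly the point the paper leaves implicit in the word ``clearly''.
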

\begin{proof} From complementarity slackness conditions \eqref{eq:KKTCS} it follows that for all $j\in \{1,\ldots,m\}\setminus \mathcal{E}$ one has $\lambda_{j}^{*}(\vec{x})=0$ for all $\vec{x}\in\mathrm{int}(\mathcal{R}_{\mathcal{E}})$. Since for all $i\in \mathcal{E}$ one has $g_{i}(\vec{z}^{*}(\vec{x}),\vec{x})=0$ for all $\vec{x}\in\mathrm{int}(\mathcal{R}_{\mathcal{E}})$, clearly the second term in \eqref{eq:partV} is equal to $0$, while the third term keeps only components indexed by $\mathcal{E}$. Therefore \eqref{eq:partV} reduces to \eqref{eq:gradV} and \eqref{eq:gradVAS}.
\end{proof}

\begin{theorem}\label{thm:gradVcont}
Let Assumption~\ref{ass:WellDefined} and Assumption~\ref{ass:StrongDuality} hold. Let the solution to the problem \eqref{eq:mpProblem} and its dual \eqref{eq:dual}, $\vec{z}^{*}(\vec{x})$ and $\boldsymbol{\lambda}^{*}(\vec{x})$, be: i) continuous functions on $\mathrm{int}(\mathcal{X})$, and ii) continuously differentiable functions on $\mathrm{int}(\mathcal{R}(\mathcal{E}))$ for all $\mathcal{E}\in\mathbb{A}$. Then the gradient of the value function, $\nabla_{\vec{x}}V(\vec{x})$, is given by expression \eqref{eq:gradV} for all $\vec{x} \in \mathrm{int}(\mathcal{X})$. Furthermore, $\nabla_{\vec{x}}V(\vec{x})$ is a continuous function for all $\vec{x} \in \mathrm{int}(\mathcal{X})$.
\end{theorem}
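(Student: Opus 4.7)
The plan is to define the candidate gradient
\[
G(\vec{x}) := \left[\dfrac{\partial f}{\partial \vec{x}}(\vec{z}^{*}(\vec{x}),\vec{x})\right]^{\top} + \left[\dfrac{\partial \vec{g}}{\partial \vec{x}}(\vec{z}^{*}(\vec{x}), \vec{x})\right]^{\top} \boldsymbol{\lambda}^{*}(\vec{x}),
\]
to show that $G$ is continuous on $\mathrm{int}(\mathcal{X})$, to use Lemma~\ref{lem:gradV} to get $\nabla_{\vec{x}}V = G$ on the dense open subset $U' := \bigcup_{\mathcal{E}\in\mathbb{A}} \mathrm{int}(\mathcal{R}(\mathcal{E}))$, and then to extend this identity to all of $\mathrm{int}(\mathcal{X})$ via a convex subdifferential argument. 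Continuity of $\nabla_{\vec{x}}V$ on $\mathrm{int}(\mathcal{X})$ will then follow immediately from that of $G$.

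Continuity of $G$ is a short check: since $f$ and $\vec{g}$ are $C^{1}$, the Jacobians $\partial f/\partial \vec{x}$ and $\partial \vec{g}/\partial \vec{x}$ depend continuously on $(\vec{z},\vec{x})$, and composing with the continuous maps $\vec{z}^{*}(\cdot)$ and $\boldsymbol{\lambda}^{*}(\cdot)$ granted by hypothesis~(i) yields continuity of $G$ on $\mathrm{int}(\mathcal{X})$. Density of $U'$ in $\mathrm{int}(\mathcal{X})$ follows from the identity $\mathcal{X}=\bigcup_{\mathcal{E}\in\mathbb{A}}\mathrm{cl}(\mathcal{R}(\mathcal{E}))$ together with the full-dimensionality of each $\mathcal{R}(\mathcal{E})$. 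Joint convexity of $f$ and $\vec{g}$ in $(\vec{z},\vec{x})$ implies that $V$ is convex on $\mathcal{X}$ (a standard argument via convex combinations of optimizers), so $V$ is locally Lipschitz on $\mathrm{int}(\mathcal{X})$ and has a non-empty compact convex subdifferential $\partial V(\bar{\vec{x}})$ at each interior point.

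Fix $\bar{\vec{x}}\in\mathrm{int}(\mathcal{X})$ and let $D\subseteq\mathrm{int}(\mathcal{X})$ denote the full-measure set of classical differentiability points of $V$. For $\vec{x}\in D\cap U'$, Lemma~\ref{lem:gradV} gives $\nabla V(\vec{x})=G(\vec{x})$. Density of $U'$, continuity of $G$, and closedness of the graph of the convex subdifferential then upgrade this to $\nabla V(\vec{x})=G(\vec{x})$ for every $\vec{x}\in D\cap\mathrm{int}(\mathcal{X})$: any sequence $\vec{x}_k\in U'$ with $\vec{x}_k\to \vec{x}$ yields $\nabla V(\vec{x}_k)=G(\vec{x}_k)\to G(\vec{x})$, and since $\partial V(\vec{x})=\{\nabla V(\vec{x})\}$ must contain this limit, $G(\vec{x})=\nabla V(\vec{x})$. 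Invoking the Clarke representation
\[
\partial V(\bar{\vec{x}})=\mathrm{conv}\!\left\{\lim_{k}\nabla V(\vec{x}_{k}) : \vec{x}_{k}\to\bar{\vec{x}}, \ \vec{x}_{k}\in D\right\},
\]
which coincides with the convex subdifferential in our setting, we see that every such limit equals $G(\bar{\vec{x}})$ by continuity of $G$, hence $\partial V(\bar{\vec{x}})=\{G(\bar{\vec{x}})\}$. This forces $V$ to be differentiable at $\bar{\vec{x}}$ with $\nabla_{\vec{x}}V(\bar{\vec{x}})=G(\bar{\vec{x}})$, proving \eqref{eq:gradV} on all of $\mathrm{int}(\mathcal{X})$; continuity of $\nabla_{\vec{x}}V$ is inherited from $G$.

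The real obstacle is precisely the passage across boundaries between critical regions: at such points $\vec{z}^{*}$ and $\boldsymbol{\lambda}^{*}$ need not be differentiable, so the chain-rule derivation that produced \eqref{eq:partV} is not directly available. The argument above circumvents that by noting that the right-hand side of \eqref{eq:gradV}, viewed as the function $G$, \emph{does} extend continuously across those boundaries, and then using the rigidity of the convex subdifferential to force $V$ to inherit this continuous gradient.
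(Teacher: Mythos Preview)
Your argument is correct and takes a genuinely different route from the paper. The paper works region by region: after invoking Lemma~\ref{lem:gradV} on each $\mathrm{int}(\mathcal{R}(\mathcal{E}))$, it fixes a boundary point $\vec{x}_{\mathrm{B}}$ shared by two neighboring full-dimensional critical regions, computes the one-sided limits of $\nabla_{\vec{x}}V$ from each side via \eqref{eq:gradVAS}, and shows their difference vanishes because every multiplier $\lambda_i^{*}(\vec{x}_{\mathrm{B}})$ with $i\in\mathcal{E}_1\setminus\mathcal{E}_2$ (respectively $i\in\mathcal{E}_2\setminus\mathcal{E}_1$) is zero by complementarity slackness combined with continuity of $\boldsymbol{\lambda}^{*}$. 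You instead observe once and for all that the right-hand side $G$ of \eqref{eq:gradV} is continuous on all of $\mathrm{int}(\mathcal{X})$, and then use convexity of $V$ together with the Rockafellar/Clarke description of $\partial V$ to force $\partial V(\bar{\vec{x}})=\{G(\bar{\vec{x}})\}$ at every interior point. Your approach handles in one stroke points where three or more regions meet, and it makes explicit why $V$ is actually differentiable at boundary points---a step the paper leaves implicit when it passes from ``the one-sided limits of $\nabla V$ agree'' to ``$\nabla V$ exists and is continuous.'' The paper's approach, on the other hand, is more elementary (no subdifferential calculus) and makes the mechanism---complementarity slackness killing the cross terms---completely transparent.
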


\begin{proof}
Since $\vec{z}^{*}(\vec{x})$ and $\boldsymbol{\lambda}^{*}(\vec{x})$ are continuously differentiable for all $\vec{x}\in\mathrm{int}(\mathcal{R}(\mathcal{E}))$, and $f(\vec{z}^{*}(\vec{x}),\vec{x})$ and $\vec{g}(\vec{z}^{*}(\vec{x}),\vec{x})$ are continuously differentiable on $\mathrm{int}(\mathcal{X})$, we can use Lemma~\ref{lem:gradV}. From \eqref{eq:gradVAS} we see that $\nabla_{\vec{x}}V(\vec{x})$ is a continuous function on $\mathrm{int}(\mathcal{R}(\mathcal{E}))$ because its computation involves composition, multiplication and addition of a finite number of continuous functions.




What is left to prove is that for any point $\vec{x}_{\mathrm{B}}$ on the boundary of the closure of two full-dimensional neighboring critical regions we get the same value for the gradient from both sides. Let $\mathcal{R}_{1}$ and $\mathcal{R}_{2}$ be two full-dimensional neighboring critical regions, with active sets $\mathcal{E}_{1}$ and $\mathcal{E}_{2}$, and let $\vec{x}_{\mathrm{B}}\in \mathrm{int}(\mathcal{X})$ be a point on their shared boundary,
\begin{equation*}
\vec{x}_{\mathrm{B}}\in \mathrm{cl}(\mathcal{R}_1) \cap \mathrm{cl}(\mathcal{R}_1).
\end{equation*}

Since $\nabla_{\vec{x}}V(\vec{x})$ is continuous on $\mathrm{int}(\mathcal{R}_k)$, $k=1,2$, we can define its limit when approaching $\vec{x}_{\mathrm{b}}$ from interior of $\mathrm{int}(\mathcal{R}_k)$
\begin{equation*}
\nabla_{\vec{x}}V_{k}(\vec{x}_{\mathrm{B}}):=
\lim\limits_{\begin{array}{c}\vec{x}\in\mathrm{int}(\mathcal{R}_{k})\\
\vec{x}\rightarrow \vec{x}_{\mathrm{B}} \end{array}} \nabla_{\vec{x}}V(\vec{x})
=
\lim\limits_{\begin{array}{c}\vec{x}\in\mathrm{int}(\mathcal{R}_{k})\\
\vec{x}\rightarrow \vec{x}_{\mathrm{B}} \end{array}}
\left[\dfrac{\partial f}{\partial \vec{x}}(\vec{z}^{*}(\vec{x}),\vec{x})\right]^{\top}
+\sum\limits_{i\in\mathcal{E}_k}
\left[ \dfrac{\partial g_{i}}{\partial \vec{x}}(\vec{z}^{*}(\vec{x}), \vec{x})\right]^{\top}
\lambda_{i}^{*}(\vec{x}).
\end{equation*}
By using the fact that $\vec{z}^{*}(\vec{x})$ and $\boldsymbol{\lambda}^{*}(\vec{x})$ are continuous on $\mathrm{int}(\mathcal{X})$, and that $f(\vec{z}^{*}(\vec{x}),\vec{x})$, and $\vec{g}(\vec{z}^{*}(\vec{x}),\vec{x})$ are continuously differentiable functions on $\mathrm{int}(\mathcal{X})$, the above expression becomes
\begin{equation*}
\nabla_{\vec{x}}V_{k}(\vec{x}_{\mathrm{B}})=
\left[\dfrac{\partial f}{\partial \vec{x}}(\vec{z}^{*}(\vec{x}_{\mathrm{B}}),\vec{x}_{\mathrm{B}})\right]^{\top}
+\sum\limits_{i\in\mathcal{E}_k}
\left[ \dfrac{\partial g_{i}}{\partial \vec{x}}(\vec{z}^{*}(\vec{x}_{\mathrm{B}}), \vec{x}_{\mathrm{B}})\right]^{\top}
\lambda_{i}^{*}(\vec{x}_{\mathrm{B}}), \qquad k=1,2.
\end{equation*}
Hence we get
\begin{equation}
\nabla_{\vec{x}}V_{1}(\vec{x}_{\mathrm{B}})-\nabla_{\vec{x}}V_{2}(\vec{x}_{\mathrm{B}})=
\sum\limits_{i\in(\mathcal{E}_1\setminus \mathcal{E}_2)}
\left[ \dfrac{\partial g_{i}}{\partial \vec{x}}(\vec{z}^{*}(\vec{x}_{\mathrm{B}}), \vec{x}_{\mathrm{B}})\right]^{\top}
\lambda_{i}^{*}(\vec{x}_{\mathrm{B}})
-
\sum\limits_{j\in(\mathcal{E}_2\setminus \mathcal{E}_1)}
\left[ \dfrac{\partial g_{j}}{\partial \vec{x}}(\vec{z}^{*}(\vec{x}_{\mathrm{B}}), \vec{x}_{\mathrm{B}})\right]^{\top}
\lambda_{j}^{*}(\vec{x}_{\mathrm{B}}).
\end{equation}
Note that for all $i\in \{1,\ldots,m\}\setminus \mathcal{E}_2$ and all $\bar{\vec{x}}\in\mathcal{R}_2$, by complementarity slackness \eqref{eq:KKTCS}, we have $\lambda_{i}^{*}(\bar{\vec{x}})=0$. Since $\lambda_{i}^{*}(\vec{x})$ is continuous function on $\mathrm{int}(\mathcal{X})$ this implies that $\lambda_{i}^{*}(\vec{x}_{\mathrm{B}})=0$ for all $i\in \{1,\ldots,m\}\setminus \mathcal{E}_2$. With similar reasoning we deduce that $\lambda_{j}^{*}(\vec{x}_{\mathrm{B}})=0$ for all $j\in \{1,\ldots,m\}\setminus \mathcal{E}_1$. Therefore, all terms on the right hand side of the above expression vanish, and we have proven the claim.

%
%
\end{proof}

Note that we exclude points on the boundary of $\mathcal{X}$ from Theorem~\ref{thm:gradVcont} because the gradient of a function is well defined only for points in strict interior of function's domain.

\section{Multi-parametric Quadratic Program}
Consider the following special case of problem \eqref{eq:mpProblem}, the so-called multi-parametric quadratic programming (mpQP) problem:
\begin{equation}\label{eq:mpQP}
\begin{array}{rcl}
V(\vec{x}):=&\min\limits_{\vec{z}} & \dfrac{1}{2}\vec{z}^{\top}\vec{H}\vec{z}\\[1ex]
&\mathrm{s.t.}& \vec{G}\vec{z}\leq \vec{W} + \vec{S}\vec{x},
\end{array}
\end{equation}
where $\vec{H}=\vec{H}^{\top}\succ 0$, $\vec{H}\in\mathbb{R}^{s\times s}$, $\vec{G}\in\mathbb{R}^{m\times s}$, $\vec{W}\in\mathbb{R}^{m}$ and $\vec{S}\in\mathbb{R}^{m\times n}$, and $\mathcal{X}=\{\vec{x}\in\mathbb{R}^{n} \ | \ \exists \vec{z}\in\mathbb{R}^{s} : \vec{G}\vec{z}\leq \vec{W}+\vec{S}\vec{x}\}$ is a full-dimensional set.

It can be shown that the dual for problem \eqref{eq:mpQP} is
\begin{equation}\label{eq:mpQPdual}
\begin{array}{rcl}
V_{\mathrm{D}}(\vec{x}):=&\max\limits_{\boldsymbol{\lambda}} &
-\dfrac{1}{2}\boldsymbol{\lambda}^{\top}\vec{G}\vec{H}^{-1}\vec{G}^{\top}\boldsymbol{\lambda}
- \boldsymbol{\lambda}^{\top}(\vec{W}+\vec{S}\vec{x})\\[1ex]
&\mathrm{s.t.}& \boldsymbol{\lambda}\geq \vec{0}_{m}.
\end{array}
\end{equation}

\begin{corollary}\label{cor:mpQP}
Let Linear Independent Constraint Qualification (LICQ) hold for all $\vec{x}\in\mathcal{X}$ in mpQP \eqref{eq:mpQP}. Then the value function $V(\vec{x})$ is continuously differentiable on $\mathrm{int}(\mathcal{X})$, and its gradient is given by the following expression
\begin{equation}\label{eq:mpQPVgrad}
\nabla_{\vec{x}}V(\vec{x})=-\vec{S}^{\top}\boldsymbol{\lambda}^{*}(\vec{x}), \quad \forall \vec{x} \in \mathrm{int}(\mathcal{X}).
\end{equation}
\end{corollary}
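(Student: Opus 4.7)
The plan is to apply Theorem~\ref{thm:gradVcont} to the mpQP data and then specialize the general gradient formula. For problem \eqref{eq:mpQP} the cost $f(\vec{z},\vec{x})=\tfrac{1}{2}\vec{z}^{\top}\vec{H}\vec{z}$ does not depend on $\vec{x}$, so $\partial f/\partial \vec{x}=\vec{0}_{n}^{\top}$, while $\vec{g}(\vec{z},\vec{x})=\vec{G}\vec{z}-\vec{W}-\vec{S}\vec{x}$ gives $\partial \vec{g}/\partial \vec{x}=-\vec{S}$. Substituting these expressions into \eqref{eq:gradV} collapses it immediately to $\nabla_{\vec{x}}V(\vec{x})=-\vec{S}^{\top}\boldsymbol{\lambda}^{*}(\vec{x})$. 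The entire burden of the proof is therefore to verify that the hypotheses of Theorem~\ref{thm:gradVcont} hold under the global LICQ assumption.

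First I would dispatch the two standing assumptions. Strict convexity $\vec{H}\succ 0$ implies that for every $\vec{x}\in\mathcal{X}$ the problem has a finite optimal value attained at a unique minimizer $\vec{z}^{*}(\vec{x})$, which gives Assumption~\ref{ass:WellDefined}. Global LICQ is a standard constraint qualification that, combined with convexity of \eqref{eq:mpQP}, implies strong duality (Assumption~\ref{ass:StrongDuality}) and also renders the Lagrange multiplier vector $\boldsymbol{\lambda}^{*}(\vec{x})$ unique at every $\vec{x}\in\mathcal{X}$.

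Next I would verify that $\vec{z}^{*}$ and $\boldsymbol{\lambda}^{*}$ are continuously differentiable on $\mathrm{int}(\mathcal{R}(\mathcal{E}))$ for each $\mathcal{E}\in\mathbb{A}$. On such a region the active set is constant, so \eqref{eq:KKT1}, the active primal equalities, and complementary slackness reduce to the block-linear system
\begin{equation*}
\begin{bmatrix}\vec{H} & \vec{G}_{\mathcal{E}}^{\top}\\ \vec{G}_{\mathcal{E}} & \vec{0}\end{bmatrix}
\begin{bmatrix}\vec{z}^{*}(\vec{x})\\ \boldsymbol{\lambda}_{\mathcal{E}}^{*}(\vec{x})\end{bmatrix}
=\begin{bmatrix}\vec{0}_{s}\\ \vec{W}_{\mathcal{E}}+\vec{S}_{\mathcal{E}}\vec{x}\end{bmatrix},
\qquad \lambda_{i}^{*}(\vec{x})=0\ \ \forall i\notin\mathcal{E}.
\end{equation*}
Since $\vec{H}\succ 0$ and $\vec{G}_{\mathcal{E}}$ has full row rank by LICQ, the KKT matrix is nonsingular, so $\vec{z}^{*}$ and $\boldsymbol{\lambda}^{*}$ depend affinely (hence smoothly) on $\vec{x}$ throughout $\mathrm{int}(\mathcal{R}(\mathcal{E}))$.

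The main obstacle is the remaining requirement of Theorem~\ref{thm:gradVcont}, namely that $\vec{z}^{*}$ and $\boldsymbol{\lambda}^{*}$ are continuous across active-set changes on all of $\mathrm{int}(\mathcal{X})$. For $\vec{z}^{*}$ I would invoke Berge's maximum theorem: strict convexity of $f$ makes the minimizer single-valued, and the feasible set-valued map $\vec{x}\mapsto\mathcal{Z}(\vec{x})$ is continuous on $\mathrm{int}(\mathcal{X})$, so $\vec{z}^{*}(\cdot)$ is continuous. For $\boldsymbol{\lambda}^{*}$ I would use LICQ locally: around any $\bar{\vec{x}}\in\mathrm{int}(\mathcal{X})$ the gradients $\nabla_{\vec{z}}g_{i}(\vec{z}^{*}(\bar{\vec{x}}),\bar{\vec{x}})$ for $i\in\mathcal{A}(\bar{\vec{x}})$ are linearly independent, so stationarity \eqref{eq:KKT1} together with $\lambda_{i}^{*}=0$ for $i\notin\mathcal{A}(\bar{\vec{x}})$ recovers $\boldsymbol{\lambda}^{*}(\vec{x})$ through a left inverse that is continuous in $(\vec{z}^{*}(\vec{x}),\vec{x})$; continuity of $\vec{z}^{*}$ then propagates to $\boldsymbol{\lambda}^{*}$. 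Once both continuities are in hand, Theorem~\ref{thm:gradVcont} applies directly and delivers both the $C^{1}$ property of $V$ on $\mathrm{int}(\mathcal{X})$ and the closed-form gradient \eqref{eq:mpQPVgrad}.
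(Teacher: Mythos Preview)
Your proposal is correct and follows essentially the same route as the paper's (first) proof: verify the hypotheses of Theorem~\ref{thm:gradVcont} for the mpQP data and then specialize \eqref{eq:gradV} using $\partial f/\partial\vec{x}=\vec{0}$ and $\partial\vec{g}/\partial\vec{x}=-\vec{S}$. The only difference is that where the paper simply cites \cite{Bemporad:2002,Tondel:2003} for the continuity and piecewise-affine structure of $\vec{z}^{*}$ and $\boldsymbol{\lambda}^{*}$, you supply self-contained arguments (nonsingularity of the KKT matrix on each region, Berge's theorem for $\vec{z}^{*}$, and a left-inverse recovery of $\boldsymbol{\lambda}^{*}$ via LICQ); the paper also offers a second, more direct proof that differentiates the dual value \eqref{eq:alt1} and uses the stationarity relation \eqref{eq:alt3}, bypassing Theorem~\ref{thm:gradVcont} entirely.
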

\begin{proof}
Since $\vec{H}$ is positive definite and constraints in \eqref{eq:mpQP} are linear, then Assumption~\ref{ass:WellDefined} and Assumption~\ref{ass:StrongDuality} are satisfied. Furthermore, it is known\footnote{See details in \cite{Bemporad:2002} and \cite{Tondel:2003}.} that, under LICQ, the following holds for \eqref{eq:mpQP}--\eqref{eq:mpQPdual}: the optimal solutions ($\vec{z}^{*}(\vec{x})$, $\boldsymbol{\lambda}^{*}(\vec{x})$) are continuous functions on $\mathrm{int}(\mathcal{X})$, with (one) affine expression for each critical region; critical regions are polyhedra (some boundaries of critical region may be open and some closed) and they partition the feasible space $\mathcal{X}$; the value function $V(\vec{x})$ is continuous and convex function, with (one) quadratic expression for each critical region.

Since all conditions of Theorem~\ref{thm:gradVcont} are satisfied we can utilize \eqref{eq:gradV} to compute $\nabla_{\vec{x}}V(\vec{x})$ for all $\vec{x}\in \mathrm{int}(\mathcal{X})$. By noting that $f(\vec{z},\vec{x})=\frac{1}{2}\vec{z}^{\top}\vec{H}\vec{z}$ and $\vec{g}(\vec{z},\vec{x}) = \vec{G}\vec{z}-\vec{S}\vec{x}-\vec{W}$, we get

\begin{equation*}
\nabla_{\vec{x}}V(\vec{x}) =
\left[\dfrac{\partial f}{\partial \vec{x}}(\vec{z}^{*}(\vec{x}),\vec{x})\right]^{\top}
+\left[\dfrac{\partial \vec{g}}{\partial \vec{x}}(\vec{z}^{*}(\vec{x}), \vec{x})\right]^{\top}
\boldsymbol{\lambda}^{*}(\vec{x})=-\vec{S}^{\top}\boldsymbol{\lambda}^{*}(\vec{x}),
\quad \forall \vec{x} \in \mathrm{int}(\mathcal{X}),
\end{equation*}
which is continuous function on $\mathrm{int}(\mathcal{X})$, since $\boldsymbol{\lambda}^{*}(\vec{x})$ is continuous on $\mathrm{int}(\mathcal{X})$.
\end{proof}

The result of Corollary~\ref{cor:mpQP} can be made even more precise if we are interested in the expression of $\nabla_{\vec{x}}V(\vec{x})$ for one critical region $\mathcal{R}(\mathcal{E})$, where $\mathcal{E}\in\mathbb{A}$,
\begin{equation}
\nabla_{\vec{x}}V(\vec{x})=-\vec{S}_{\mathcal{E}}^{\top}
\boldsymbol{\lambda}_{\mathcal{E}}^{*} (\vec{x}),
\quad \forall \vec{x} \in \mathcal{R}(\mathcal{E}) \cap \mathrm{int}(\mathcal{X}),
\end{equation}
which combined with
\begin{equation}\label{eq:mpQPlambdastar}
\boldsymbol{\lambda}_{\mathcal{E}}^{*}=
-(\vec{G}_{\mathcal{E}}\vec{H}^{-1}\vec{G}_{\mathcal{E}}^{\top})^{-1}
(\vec{W}_{\mathcal{E}}+\vec{S}_{\mathcal{E}}\vec{x}),
\quad \forall \vec{x} \in \mathcal{R}(\mathcal{E}),
\end{equation}
finally gives
\begin{equation}\label{eq:mpQPVgradCR}
\nabla_{\vec{x}}V(\vec{x})=-\vec{S}_{\mathcal{E}}^{\top}
(\vec{G}_{\mathcal{E}}\vec{H}^{-1}\vec{G}_{\mathcal{E}}^{\top})^{-1}
(\vec{W}_{\mathcal{E}}+\vec{S}_{\mathcal{E}}\vec{x}),
\quad \forall \vec{x} \in \mathcal{R}(\mathcal{E}) \cap \mathrm{int}(\mathcal{X}).
\end{equation}
Note that in the case when $\emptyset \in \mathbb{A}$ (i.e., there is a full-dimensional critical region with unconstrained solution, $\mathcal{R}(\emptyset)$) the expression \eqref{eq:mpQPVgradCR} must be modified to
\begin{equation}\label{eq:mpQPVgradUnc}
\nabla_{\vec{x}}V(\vec{x})=\vec{0}_{n},
\quad \forall \vec{x} \in \mathcal{R}(\emptyset)\cap \mathrm{int}(\mathcal{X}).
\end{equation}

We point out that Corollary~\ref{cor:mpQP} can be proven more directly, without referral to the generalized result of Theorem~\ref{thm:gradVcont}.
\begin{proof}[Second proof of Corollary~\ref{cor:mpQP}]
By strong duality of \eqref{eq:mpQP} and \eqref{eq:mpQPdual} we get
\begin{equation}\label{eq:alt1}
V(\vec{x}) = -\dfrac{1}{2}[\boldsymbol{\lambda}^{*}(\vec{x})]^{\top} \vec{G}\vec{H}^{-1}\vec{G}^{\top}\boldsymbol{\lambda}^{*}(\vec{x})
- [\boldsymbol{\lambda}^{*}(\vec{x})]^{\top}(\vec{W}+\vec{S}\vec{x}).
\end{equation}
By differentiation of \eqref{eq:alt1} (assuming $\vec{x}\in\mathrm{int}(\mathcal{R}(\mathcal{E}))$ for some $\mathcal{E}\in\mathbb{A}$) we get

\begin{equation}\label{eq:alt2}
\begin{array}{rcl}
\nabla_{\vec{x}}V(\vec{x}) &=&
- \left[\dfrac{\partial \boldsymbol{\lambda}^{*}}{\partial \vec{x}}(\vec{x})\right]^{\top} \vec{G}\vec{H}^{-1}\vec{G}^{\top}\boldsymbol{\lambda}^{*}(\vec{x})
- \left[\dfrac{\partial \boldsymbol{\lambda}^{*}}{\partial \vec{x}}(\vec{x})\right]^{\top} (\vec{W}+\vec{S}\vec{x})
- \vec{S}^{\top}\boldsymbol{\lambda}^{*}(\vec{x}) =\\[3ex]
&=&
\left[\dfrac{\partial \boldsymbol{\lambda}^{*}}{\partial \vec{x}}(\vec{x})\right]^{\top}
(- \vec{G}\vec{H}^{-1}\vec{G}^{\top}\boldsymbol{\lambda}^{*}(\vec{x}) - \vec{W}-\vec{S}\vec{x})
- \vec{S}^{\top}\boldsymbol{\lambda}^{*}(\vec{x}).
\end{array}
\end{equation}
By KKT condition \eqref{eq:KKTOPT} we have
\begin{equation}\label{eq:alt3}
\vec{z}^{*}(\vec{x}) = -\vec{H}^{-1}\vec{G}^{\top}\boldsymbol{\lambda}^{*}(\vec{x}),
\end{equation}
which combined with \eqref{eq:alt2} gives
\begin{equation}\label{eq:alt4}
\begin{array}{rcl}
\nabla_{\vec{x}}V(\vec{x}) &=&
- \vec{S}^{\top}\boldsymbol{\lambda}^{*}(\vec{x}) +
\left[\dfrac{\partial \boldsymbol{\lambda}^{*}}{\partial \vec{x}}(\vec{x})\right]^{\top}
(\vec{G}\vec{z}^{*} - \vec{W}-\vec{S}\vec{x})=\\
&=&
- \vec{S}^{\top}\boldsymbol{\lambda}^{*}(\vec{x})
+\sum\limits_{i\in\mathcal{E}} \left[\dfrac{\partial {\lambda}_{i}^{*}}{\partial \vec{x}}(\vec{x})\right]^{\top}
\underbrace{(\vec{G}_{i}\vec{z}^{*} - \vec{W}_{i}-\vec{S}_{i}\vec{x})}_{0}
+\sum\limits_{i\in\{1,\ldots, m\}\setminus \mathcal{E}}
\underbrace{\left[\dfrac{\partial {\lambda}_{i}^{*}}{\partial \vec{x}}(\vec{x})\right]^{\top}}_{\vec{0}_{n}}
(\vec{G}_{i}\vec{z}^{*} - \vec{W}_{i}-\vec{S}_{i}\vec{x})=\\
&=&
- \vec{S}^{\top}\boldsymbol{\lambda}^{*}(\vec{x}).
\end{array}
\end{equation}
Expression \eqref{eq:alt4} was derived for $\vec{x}\in\mathrm{int}(\mathcal{R}(\mathcal{E}))$, but, since it is the same for all full-dimensional critical regions, and since we have $\lambda^{*}(\vec{x})$ continuous on $\mathrm{int}(\mathcal{X})$, we have completed the proof.
\end{proof}

\bibliographystyle{plain}

\end{document}